\newtheorem{definition}{Definition}
\newtheorem{theorem}{Theorem}
\newtheorem{lemma}{Lemma}
\newtheorem{remark}{Remark}
\begin{document}

\title{Variable order Mittag--Leffler fractional operators on isolated time scales
and application to the calculus of variations\thanks{This is a preprint of a paper 
whose final and definite form is with Springer, as a chapter book.}}

\author{Thabet Abdeljawad$^{a}$\\
\texttt{tabdeljawad@psu.edu.sa}
\and
Raziye Mert$^{b}$\\
\texttt{rmert@thk.edu.tr}
\and
Delfim F. M. Torres$^{c}$\\
\texttt{delfim@ua.pt}}

\date{$^{a}$Department of Mathematics and General Sciences,\\
Prince Sultan University, P. O. Box 66833, 11586 Riyadh, Saudi Arabia\\[0.3cm]
$^{b}$Mechatronic Engineering Department,\\ 
University of Turkish Aeronautical Association, 06790 Ankara, Turkey\\[0.3cm]
$^{c}$Center for Research and Development in Mathematics and Applications (CIDMA),\\ 
Department of Mathematics, University of Aveiro, 3810-193 Aveiro, Portugal}

\maketitle


\begin{abstract}
We introduce new fractional operators of variable order on isolated time scales
with Mittag--Leffler kernels. This allows a general formulation 
of a class of fractional variational problems involving 
variable-order difference operators. Main results give 
fractional integration by parts formulas and necessary 
optimality conditions of Euler--Lagrange type.

\bigskip

\noindent {\bf Keywords:} 
fractional calculus on isolated time scales;
variable order operators with Mittag--Leffler kernels;
fractional sums and differences of variable order;
summation by parts;
variational principles on isolated time scales.

\medskip

\noindent {\bf MSC 2010:} 26A33; 26E70; 49K05.
\end{abstract}


\section{Introduction}
\label{s:1}

Fractional calculus is a generalization of ordinary differentiation 
and integration to an arbitrary non-integer order. It has been used 
effectively in the modeling of many problems in various fields of science and engineering,
reflecting successfully the description of non-local properties of complex systems 
\cite{Dumitru,Samko}. For the sake of finding more fractional operators with different kernels, 
recently several authors have introduced and studied new non-local derivatives with non-singular 
kernels and have applied them successfully to some real world problems 
\cite{Thabet1,Thabet2,Abdon,CF,CF1,LN}. What makes those fractional derivatives 
with Mittag--Leffler kernels more interesting is that their corresponding fractional 
integrals contain Riemann--Liouville fractional integrals as part of their structure. 
Moreover, such operators enable numerical analysts to develop more efficient algorithms 
in solving fractional dynamical systems by concentrating only on the coefficients 
of the differential equations rather than worrying about the singularity of the kernels, 
as in the case of classical fractional operators \cite{MR3726912}.

In 1993, Samko and Ross investigated integrals and derivatives not of 
a constant but of variable order \cite{RS,S,SR}. 
Afterwards, several pure mathematical and applicational papers contributed 
to the theory of variable order fractional calculus  
\cite{AP,C,DC,LH,RC1,RC2}. Here we continue this line of research.

The article is organized as follows. In Section~\ref{sec:02}, 
we introduce new definitions of two different types of left and right 
nabla fractional sums of variable order, two different types 
of discrete versions of the left and right generalized fractional 
integral operators, together with two different types of 
fractional sums and differences of variable order
in the sense of Atangana--Baleanu. 
Afterwards, in Section~\ref{sec:03}, we prove integration by parts formulas
for Atangana--Baleanu fractional sums and differences with variable order.
We end with Section~\ref{sec:03}, applying our results to the calculus of variations.


\section{Fractional sums and differences of variable order}
\label{sec:02}

The study of fractional calculus on time scales 
was initiated with the papers \cite{MR2728463,MyID:179,MR2800417}
and is now under strong development: see, e.g., 
\cite{MR3571716,MR3453710,MyID:328,MR3729560,MR3799774}.
Here, inspired by the results of \cite{Thabet2,Abdon},
we introduce new nabla fractional operators of variable order
on isolated time scales. The reader interested
on the motivation and importance to consider variable order operators
is referred to \cite{MR3434464,MR3606691,MR3714436} and references therein.

Let $a,b\in\mathbb{R}$ with $b-a$ a positive integer.
The sets $\mathbb{N}_a$, $_{b}\mathbb{N}$, 
and $\mathbb{N}_{a,b}$ are defined by
$$
\mathbb{N}_a=\{a,a+1,a+2,...\},\quad _{b}\mathbb{N}=\{...,b-2,b-1,b\},
\quad \mathbb{N}_{a,b}=\{a,a+1,a+2,...,b\},
$$
respectively. Our operators use the concepts 
of \emph{rising function} and 
\emph{discrete Mittag--Leffler function}.

\begin{definition}[Rising function \cite{CP}] 
\label{rising}
\noindent (i) For a natural number $m$ and $t\in\mathbb{R}$, 
the $m$ rising (ascending) factorial of $t$ is defined by
\begin{equation*}
t^{\overline{m}}= \prod_{k=0}^{m-1}(t+k),~~~t^{\overline{0}}=1.
\end{equation*}
\noindent (ii) For any real number $\alpha$, 
the (generalized) rising function is defined by
\begin{equation*}\label{alpharising}
t^{\overline{\alpha}}=\frac{\Gamma(t+\alpha)}{\Gamma(t)},
\quad t \in \mathbb{R}\setminus \{\ldots,-2,-1,0\},
\quad 0^{\overline{\alpha}}=0.
\end{equation*}
\end{definition}

\begin{definition}[Nabla discrete Mittag--Leffler function \cite{dualCaputo,Thsemi}]
\label{nDML} 
For $\lambda \in \mathbb{R}$, $|\lambda|<1$ and $\alpha, \beta, z \in \mathbb{C}$ 
with $Re(\alpha)>0$, the nabla discrete Mittag--Leffler function is defined by
\begin{equation*}
E_{\overline{\alpha, \beta}}(\lambda,z)
= \sum_{k=0}^\infty \lambda^k
\frac{z^{\overline{k\alpha+\beta-1}}} {\Gamma(\alpha k+\beta)}.
\end{equation*}
For $\beta=1$, we write
\begin{equation*} 
E_{\overline{\alpha}}(\lambda, z)
\triangleq E_{\overline{\alpha, 1}}(\lambda, z)
= \sum_{k=0}^\infty \lambda^k
\frac{z^{\overline{k\alpha}}} {\Gamma(\alpha k+1)}.
\end{equation*}
\end{definition}

To start, we define two different types 
of nabla fractional sums of variable order.

\begin{definition}[Left nabla fractional sums of order $\alpha(t)$ --- types $I$ and $II$] 
\label{left fractional sums}
Let $0<\alpha(t)\leq 1$ for all $t\in\mathbb{N}_a$. For a function 
$f:\mathbb{N}_a\rightarrow \mathbb{R}$,
\begin{enumerate}
\item the type $I$ left nabla fractional sum of order $\alpha(t)$ is defined by
\begin{equation*}
_{a}\nabla^{-\alpha(t)}f(t)=\frac{1}{\Gamma(\alpha(t))}
\sum_{s=a+1}^t(t-\rho(s))^{\overline{\alpha(t)-1}}f(s),
\quad t \in \mathbb{N}_{a+1};
\end{equation*}
\item the type $II$ left nabla fractional sum of order $\alpha(t)$ is defined by
\begin{equation*}
_{a}^{*}\nabla^{-\alpha(t)}f(t)
=\sum_{s=a+1}^t(t-\rho(s))^{\overline{\alpha(s)-1}}f(s)
\frac{1}{\Gamma(\alpha(s))},\quad t \in \mathbb{N}_{a+1}.
\end{equation*}
\end{enumerate}
\end{definition}

\begin{definition}[Right nabla fractional sums of order $\alpha(t)$ --- types $I$ and $II$] 
\label{right fractional sums}
Let $0<\alpha(t)\leq 1$ for all $t\in{_{b}\mathbb{N}}$. 
For a function $f:{_{b}\mathbb{N}}\rightarrow \mathbb{R}$,
\begin{enumerate}
\item the type $I$ right nabla fractional sum of order $\alpha(t)$ is defined by
\begin{equation*}
\nabla_{b}^{-\alpha(t)} f(t)
=\frac{1}{\Gamma(\alpha(t))}
\sum_{s=t}^{b-1}(s-\rho(t))^{\overline{\alpha(t)-1}}f(s),\quad t \in {_{b-1}\mathbb{N}};
\end{equation*}
\item the type $II$ right nabla fractional sum of order $\alpha(t)$ is defined by
\begin{equation*}
^{*}\nabla_{b}^{-\alpha(t)} f(t)
= \sum_{s=t}^{b-1}(s-\rho(t))^{\overline{\alpha(s)-1}}f(s)
\frac{1}{\Gamma(\alpha(s))},\quad t \in {_{b-1}\mathbb{N}}.
\end{equation*}
\end{enumerate}
\end{definition}

Following \cite{Thabet2}, we now define two different discrete versions 
of the left and right generalized fractional integral operators.

\begin{definition}[Discrete left generalized fractional integral operators --- types $I$ and $II$]
\label{GIODEF1}
Let  $0<\alpha(t)<1/2$ for all $t\in\mathbb{N}_a$. For a function 
$\varphi:\mathbb{N}_a\rightarrow \mathbb{R}$,
\begin{enumerate}
\item the type $I$ discrete left generalized fractional integral operator is defined by
\begin{equation}
\label{GIO}
\textbf{E}_{\overline{\alpha(t),1},\frac{-\alpha(t)}{1-\alpha(t)},a^+}\varphi(t)
=\frac{B(\alpha(t))}{1-\alpha(t)}\sum_{s=a+1}^t E_{\overline{\alpha(t)}} 
\left[\frac{-\alpha(t)}{1-\alpha(t)}, t-\rho(s)\right]\varphi(s),
\quad t\in \mathbb{N}_{a+1};
\end{equation}
\item the type $II$ discrete left generalized fractional integral operator is defined by
\begin{equation}
\label{starGIO}
\mathcal{E}_{\overline{\alpha(t),1},\frac{-\alpha(t)}{1-\alpha(t)},a^+}\varphi(t)
=\sum_{s=a+1}^t \frac{B(\alpha(s))}{1-\alpha(s)} E_{\overline{\alpha(s)}} 
\left[\frac{-\alpha(s)}{1-\alpha(s)},t-\rho(s)\right]\varphi(s),\quad t\in \mathbb{N}_{a+1}.
\end{equation}
\end{enumerate}
\end{definition}

\begin{definition}[Discrete right generalized fractional integral operators --- types $I$ and $II$]
\label{GIODEF2}
Let  $0<\alpha(t)<1/2$ for all $t\in{_{b}\mathbb{N}}$. 
For a function $\varphi:{_{b}\mathbb{N}}\rightarrow \mathbb{R}$,
\begin{enumerate}
\item the type $I$ discrete right generalized fractional integral operator is defined by
\begin{equation}
\label{GIOr}
\textbf{ E}_{\overline{\alpha(t),1},\frac{-\alpha(t)}{1-\alpha(t)} ,b^-}\varphi(t)
=\frac{B(\alpha(t))}{1-\alpha(t)}\sum_{s=t}^{b-1}  E_{\overline{\alpha(t)}}\left[
\frac{-\alpha(t)}{1-\alpha(t)}, s-\rho(t)\right]\varphi(s),
\quad t\in {_{b-1}\mathbb{N}};
\end{equation}
\item the type $II$ discrete  right generalized fractional integral operator is defined by
\begin{equation}
\label{starGIOr}
\mathcal{E}_{\overline{\alpha(t),1}, \frac{-\alpha(t)}{1-\alpha(t)},b^-}\varphi(t)
= \sum_{s=t}^{b-1} \frac{B(\alpha(s))}{1-\alpha(s)} E_{\overline{\alpha(s)}} 
\left[\frac{-\alpha(s)}{1-\alpha(s)},s-\rho(t)\right]\varphi(s),
\quad t\in {_{b-1}\mathbb{N}}.
\end{equation}
\end{enumerate}
\end{definition}

We now define two different types of fractional sums and differences 
of variable order in the sense of Atangana--Baleanu \cite{Abdon}
(the so-called $AB$ operators).

\begin{definition}[Left $AB$ nabla fractional sums of order $\alpha(t)$ --- types $I$ and $II$]
\label{left AB fractional sums}
Let $0<\alpha(t)\leq 1$ for all $t\in\mathbb{N}_a$. 
For a function $f:\mathbb{N}_a\rightarrow \mathbb{R}$,
\begin{enumerate}
\item the type $I$ left $AB$ nabla fractional sum of order $\alpha(t)$ is defined by
\begin{equation}
\label{EQ5}
\begin{split}
^{AB}_{a}\nabla^{-\alpha(t)}f(t)
&=\frac{1-\alpha(t)}{B(\alpha(t))}f(t)
+\frac{\alpha(t)}{B(\alpha(t))\Gamma(\alpha(t))}
\sum_{s=a+1}^t(t-\rho(s))^{\overline{\alpha(t)-1}}f(s)\\
&=\frac{1-\alpha(t)}{B(\alpha(t))}f(t)+\frac{\alpha(t)}{B(\alpha(t))}{_{a}}
\nabla^{-\alpha(t)}f(t),\quad t \in \mathbb{N}_{a+1};
\end{split}
\end{equation}
\item the type $II$ left $AB$ nabla fractional sum of order $\alpha(t)$ is defined by
\begin{equation}
\label{EQ7}
\begin{split}
^{*AB}_{a}\nabla^{-\alpha(t)}f(t)
&=\frac{1-\alpha(t)}{B(\alpha(t))}f(t)+\sum_{s=a+1}^t\frac{\alpha(s)}{B(\alpha(s))
\Gamma(\alpha(s))}(t-\rho(s))^{\overline{\alpha(s)-1}}f(s)\\
&=\frac{1-\alpha(t)}{B(\alpha(t))}f(t)+_{a}^{*}\nabla^{-\alpha(t)}
\frac{\alpha f}{B\circ\alpha}(t),\quad t \in \mathbb{N}_{a+1}.
\end{split}
\end{equation}
\end{enumerate}
\end{definition}

\begin{definition}[Right $AB$ nabla fractional sums of order $\alpha(t)$ --- types $I$ and $II$]
\label{right AB fractional sums}
Let $0<\alpha(t)\leq 1$ for all $t\in{_{b}\mathbb{N}}$. 
For a function $f:{_{b}\mathbb{N}}\rightarrow \mathbb{R}$,
\begin{enumerate}
\item the type $I$ right $AB$ nabla fractional 
sum of order $\alpha(t)$ is defined by
\begin{equation}
\label{EQ6}
\begin{split}
^{AB}\nabla_{b}^{-\alpha(t)}f(t)
&=\frac{1-\alpha(t)}{B(\alpha(t))}f(t)+\frac{\alpha(t)}{B(\alpha(t))
\Gamma(\alpha(t))}\sum_{s=t}^{b-1}(s-\rho(t))^{\overline{\alpha(t)-1}}f(s)\\
&=\frac{1-\alpha(t)}{B(\alpha(t))}f(t)+\frac{\alpha(t)}{B(\alpha(t))}
\nabla_{b}^{-\alpha(t)}f(t),\quad t \in {_{b-1}\mathbb{N}};
\end{split}
\end{equation}
\item the type $II$ right $AB$ nabla fractional sum of order $\alpha(t)$ is defined by
\begin{equation}
\label{EQ8}
\begin{split}
^{*AB}\nabla_{b}^{-\alpha(t)}f(t)
&=\frac{1-\alpha(t)}{B(\alpha(t))}f(t)+\sum_{s=t}^{b-1}\frac{\alpha(s)}{B(\alpha(s))
\Gamma(\alpha(s))}(s-\rho(t))^{\overline{\alpha(s)-1}}f(s)\\
&=\frac{1-\alpha(t)}{B(\alpha(t))}f(t)+^{*}\nabla_{b}^{-\alpha(t)}
\frac{\alpha f}{B\circ\alpha}(t),\quad t \in {_{b-1}\mathbb{N}}.
\end{split}
\end{equation}
\end{enumerate}
\end{definition}

Note that in Definitions~\ref{left AB fractional sums} 
and \ref{right AB fractional sums}, if $\alpha(t) \equiv 0$, then we recover
the initial function; if $\alpha(t) \equiv 1 $, then we recover the ordinary sum.

\begin{definition}[Left Riemann--Liouville $AB$ nabla fractional differences 
of order $\alpha(t)$ --- types I and II]
\label{RLVO}
Let $0<\alpha(t)<1/2$ for all $t\in\mathbb{N}_a$. 
For a function $f:\mathbb{N}_a\rightarrow \mathbb{R}$,
\begin{enumerate}
\item the type $I$ left Riemann--Liouville $AB$ nabla fractional 
difference of order  $\alpha(t)$ is defined by
\begin{equation}
\label{dq}
^{ABR}_{a}\nabla^{\alpha(t)}f(t)=\nabla\textbf{E}_{\overline{\alpha(t),1}, 
\frac{-\alpha(t)}{1-\alpha(t)},a^+}f(t),\quad t\in\mathbb{N}_{a+1};
\end{equation}
\item the type $II$ left Riemann--Liouville $AB$ nabla fractional 
difference of order  $\alpha(t)$ is defined by
\begin{equation}
\label{dqq}
^{ABR}_{a}\widehat{\nabla}^{\alpha(t)}f(t)=\nabla\mathcal{E}_{\overline{\alpha(t),1}, 
\frac{-\alpha(t)}{1-\alpha(t)},a^+}f(t),\quad t\in\mathbb{N}_{a+1}.
\end{equation}
\end{enumerate}
\end{definition}

\begin{definition}[Right Riemann--Liouville $AB$ nabla fractional differences 
of order $\alpha(t)$ --- types I and II]
\label{RLVOR}
Let $0<\alpha(t)<1/2$ for all $t\in{_{b}\mathbb{N}}$. 
For a function $f:{_{b}\mathbb{N}}\rightarrow \mathbb{R}$,
\begin{enumerate}
\item the type $I$ right Riemann--Liouville $AB$ nabla 
fractional difference of order  $\alpha(t)$ is defined by
\begin{equation*}
^{ABR}\nabla_b^{\alpha(t)}f(t)=-\Delta\textbf{E}_{\overline{\alpha(t),1}, 
\frac{-\alpha(t)}{1-\alpha(t)},b^-}f(t),\quad t\in {_{b-1}\mathbb{N}};
\end{equation*}
\item the type $II$ right Riemann--Liouville $AB$ 
nabla fractional difference of order  $\alpha(t)$ is defined by
\begin{equation*}
^{ABR}\widehat{\nabla}_b^{\alpha(t)}f(t)=-\Delta\mathcal{E}_{\overline{\alpha(t),1}, 
\frac{-\alpha(t)}{1-\alpha(t)},b^-}f(t),\quad t\in {_{b-1}\mathbb{N}}.
\end{equation*}
\end{enumerate}
\end{definition}

\begin{definition}[Left Caputo $AB$ nabla fractional differences of order $\alpha(t)$ --- types I and II]
\label{LeftC}
Let $0<\alpha(t)<1/2$ for all $t\in\mathbb{N}_a$. For a function $f:\mathbb{N}_a\rightarrow \mathbb{R},$
\begin{enumerate}
\item the type $I$ left Caputo $AB$ nabla fractional difference of order $\alpha(t)$ is defined by
\begin{equation*}
^{ABC}_{a}\nabla^{\alpha(t)}f(t)=\textbf{ E}_{\overline{\alpha(t),1},
\frac{-\alpha(t)}{1-\alpha(t)},a^+}\nabla f(t),\quad t\in\mathbb{N}_{a+1};
\end{equation*}
\item the type $II$ left Caputo $AB$ nabla fractional 
difference of order  $\alpha(t)$ is defined by
\begin{equation*}
^{ABC}_{a}\widehat{\nabla}^{\alpha(t)}f(t)=\mathcal{ E}_{
\overline{\alpha(t),1},\frac{-\alpha(t)}{1-\alpha(t)},a^+}
\nabla f(t),\quad t\in\mathbb{N}_{a+1}.
\end{equation*}
\end{enumerate}
\end{definition}

\begin{definition}[Right Caputo $AB$ nabla fractional 
differences of order $\alpha(t)$ --- types I and II]
\label{RightC}
Let $0<\alpha(t)<1/2$ for all $t\in{_{b}\mathbb{N}}$. 
For a function $f:{_{b}\mathbb{N}}\rightarrow \mathbb{R}$,
\begin{enumerate}
\item the type $I$ right Caputo $AB$ nabla fractional 
difference of order  $\alpha(t)$ is defined by
\begin{equation*}
^{ABC}\nabla^{\alpha(t)}_{b}f(t)=-\textbf{E}_{\overline{\alpha(t),1},
\frac{-\alpha(t)}{1-\alpha(t)},b^-}\Delta f (t),\quad t\in {_{b-1}\mathbb{N}};
\end{equation*}
\item the type $II$ right Caputo $AB$ nabla fractional 
difference of order  $\alpha(t)$ is defined by
\begin{equation*}
^{ABC}\widehat{\nabla}^{\alpha(t)}_{b}f(t)=-\mathcal{E}_{\overline{\alpha(t),1},
\frac{-\alpha(t)}{1-\alpha(t)},b^-}\Delta f(t),\quad t\in {_{b-1}\mathbb{N}}.
\end{equation*}
\end{enumerate}
\end{definition}

\begin{remark}
If we replace $\alpha(t)$ in (\ref{GIO}) and (\ref{GIOr}) by $\alpha(t-s)$ 
and replace each $\alpha(s)$ in (\ref{starGIO}) and (\ref{starGIOr}) by  
$\alpha(t-s)$, then the $ABR$ and $ABC$ fractional differences with variable 
order can be expressed in convolution  form. Similarly, if we replace $\alpha(t)$ 
in (\ref{EQ5}) and (\ref{EQ6}) by $\alpha(t-s)$ and replace each $\alpha(s)$ 
in (\ref{EQ7}) and (\ref{EQ8}) by  $\alpha(t-s)$, then the second part of the 
$AB$ fractional integrals with variable order can be expressed in convolution form.
\end{remark}


\section{Summation by parts for variable order fractional operators}
\label{sec:03}

Summation/integration by parts has a very important role in mathematics:
see, e.g., \cite{MR3749950,MR1572851,MR3762312}. This is particularly
true in the calculus of variations and optimal control, 
to prove necessary optimality conditions of Euler--Lagrange type 
(cf. proof of Theorem~\ref{thm:EL:eq}).

\begin{lemma}[Integration by parts formula for nabla fractional sums of order $\alpha(t)$]	
\label{IBPFD} Let $0<\alpha(t)\leq 1$ for all $t\in\mathbb{N}_{a,b}$.  
For functions $f,g:\mathbb{N}_{a,b}\rightarrow \mathbb{R}$, we have
$$
\sum_{t=a+1}^{b-1}f(t)~_{a}\nabla^{-\alpha(t)}g(t)
=\sum_{t=a+1}^{b-1}g(t)~^{*}\nabla_{b}^{-\alpha(t)}f(t);
$$
$$
\sum_{t=a+1}^{b-1}f(t)~\nabla_{b}^{-\alpha(t)}g(t)
=\sum_{t=a+1}^{b-1}g(t)~_{a}^{*}\nabla^{-\alpha(t)}f(t).
$$
\end{lemma}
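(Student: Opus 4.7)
The plan is to prove both identities by a direct interchange of the order of summation (Fubini's theorem for finite double sums), exploiting the key observation that, after swapping, the inner variable which was $t$ becomes $s$, so the dependence on $\alpha(t)$ in the type $I$ operator becomes dependence on $\alpha(s)$, which is precisely the structure of the corresponding type $II$ operator on the other side of the identity.

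First I would expand the left-hand side of the first identity using the definition of ${}_{a}\nabla^{-\alpha(t)}$ from Definition~\ref{left fractional sums}:
$$
\sum_{t=a+1}^{b-1} f(t)\, {}_{a}\nabla^{-\alpha(t)} g(t)
= \sum_{t=a+1}^{b-1} f(t)\, \frac{1}{\Gamma(\alpha(t))} \sum_{s=a+1}^{t} (t-\rho(s))^{\overline{\alpha(t)-1}} g(s).
$$
The summation region is the triangle $\{(s,t) : a+1 \leq s \leq t \leq b-1\}$. Interchanging the order, $s$ now runs from $a+1$ to $b-1$ and $t$ runs from $s$ to $b-1$. Factoring $g(s)$ outside and then relabeling $s \leftrightarrow t$, the inner sum becomes, by inspection, exactly ${}^{*}\nabla_{b}^{-\alpha(t)} f(t)$ as given in Definition~\ref{right fractional sums}. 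Note the crucial role of the type $II$ placement: the factor $\frac{1}{\Gamma(\alpha(\cdot))}$ sits on the summation variable, which matches what the Fubini swap produces.

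For the second identity I would proceed symmetrically, expanding $\nabla_{b}^{-\alpha(t)} g(t)$ via Definition~\ref{right fractional sums}(1), swapping the order of summation over the triangle $\{(s,t) : a+1 \leq t \leq s \leq b-1\}$, and relabeling to recognize the inner sum as ${}_{a}^{*}\nabla^{-\alpha(t)} f(t)$ of Definition~\ref{left fractional sums}(2).

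There is essentially no analytic obstacle here: the sums are finite and all quantities are real, so Fubini is free. The only thing to be careful about is purely bookkeeping, namely that the type $I$/type $II$ asymmetry in Definitions~\ref{left fractional sums} and~\ref{right fractional sums} is precisely what makes the swap close up cleanly; had both sides used type $I$ operators, the identity would fail because the $\alpha$ would be evaluated at the wrong variable after the swap. I would therefore emphasize this matching of types as the conceptual content of the lemma, with the computation itself being a one-line interchange of summation.
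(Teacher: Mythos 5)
Your proposal is correct and is essentially identical to the paper's own proof: both expand the type $I$ sum by definition, interchange the order of summation over the triangular index set, and recognize the resulting inner sum as the corresponding type $II$ operator. Your added remark that the type $I$/type $II$ pairing is exactly what makes the swap close up is a sound observation, implicit in the paper's computation.
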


\begin{proof}
From Definition~\ref{left fractional sums}, and by changing 
the order of summation, we get
\begin{eqnarray*}
\sum_{t=a+1}^{b-1}f(t)~_{a}\nabla^{-\alpha(t)}g(t)
&=&\sum_{t=a+1}^{b-1}f(t)\,\frac{1}{\Gamma(\alpha(t))}
\sum_{s=a+1}^t(t-\rho(s))^{\overline{\alpha(t)-1}}g(s)\\
&=&\sum_{s=a+1}^{b-1}g(s)\left(\sum_{t=s}^{b-1}(t
-\rho(s))^{\overline{\alpha(t)-1}}f(t)\frac{1}{\Gamma(\alpha(t))}\right)\\
&=& \sum_{s=a+1}^{b-1}g(s)~^{*}\nabla_{b}^{-\alpha(t)}f(s).
\end{eqnarray*}
The proof of the second assertion follows similarly.
\end{proof}

Now, with the help of Lemma~\ref{IBPFD}, we can prove the following 
integration by parts formula for $AB$ fractional sums of variable order.

\begin{theorem}[Integration by parts formula for $AB$ nabla fractional sums of order $\alpha(t)$] 
\label{T2}
Let $0<\alpha(t)\leq 1$ for all $t\in\mathbb{N}_{a,b}$.  
For functions $f,g:\mathbb{N}_{a,b}\rightarrow \mathbb{R}$, we have
\begin{equation*}
\sum_{t=a+1}^{b-1}f(t)~_{a}^{AB}\nabla^{-\alpha(t)}g(t)
=\sum_{t=a+1}^{b-1}g(t)~^{*AB}\nabla_{b}^{-\alpha(t)}f(t);
\end{equation*}
\begin{equation*}
\sum_{t=a+1}^{b-1}f(t)~_{a}^{*AB}\nabla^{-\alpha(t)}g(t)
=\sum_{t=a+1}^{b-1}g(t)~^{AB}\nabla_{b}^{-\alpha(t)}f(t).
\end{equation*}
\end{theorem}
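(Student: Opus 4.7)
The plan is to prove both identities by expanding the variable-order $AB$ operators into their defining two-term form, separating off the local (non-convolution) part, and then invoking Lemma~\ref{IBPFD} on the remaining classical nabla fractional sum with an appropriately reweighted integrand. The key observation that makes the theorem work is that the type~I left $AB$ sum contains a type~I classical sum ${}_a\nabla^{-\alpha(t)}$ acting on $g$, multiplied by the scalar coefficient $\alpha(t)/B(\alpha(t))$, while the type~II right $AB$ sum contains a type~II classical sum ${}^{*}\nabla_b^{-\alpha(t)}$ acting on the reweighted function $\alpha f/(B\circ\alpha)$. These two structures are exactly what Lemma~\ref{IBPFD} interchanges.

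For the first identity, I would start from the second line of (\ref{EQ5}) to write
\[
\sum_{t=a+1}^{b-1} f(t)\,{}^{AB}_{a}\nabla^{-\alpha(t)}g(t)
=\sum_{t=a+1}^{b-1}\frac{1-\alpha(t)}{B(\alpha(t))}f(t)g(t)
+\sum_{t=a+1}^{b-1}\frac{\alpha(t)f(t)}{B(\alpha(t))}\,{}_{a}\nabla^{-\alpha(t)}g(t).
\]
The first sum on the right is symmetric in $f$ and $g$ and will reappear untouched on the other side. For the second sum, apply the first identity of Lemma~\ref{IBPFD} with $f$ replaced by $\alpha f/(B\circ\alpha)$, obtaining
\[
\sum_{t=a+1}^{b-1}\frac{\alpha(t)f(t)}{B(\alpha(t))}\,{}_{a}\nabla^{-\alpha(t)}g(t)
=\sum_{t=a+1}^{b-1} g(t)\,{}^{*}\nabla_{b}^{-\alpha(t)}\!\frac{\alpha f}{B\circ\alpha}(t).
\]
Factoring $g(t)$ out of the two resulting sums, the bracketed expression is by (\ref{EQ8}) precisely ${}^{*AB}\nabla_{b}^{-\alpha(t)}f(t)$, which closes the first identity.

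For the second identity, I would proceed dually, starting from the second line of (\ref{EQ7}):
\[
\sum_{t=a+1}^{b-1} f(t)\,{}^{*AB}_{a}\nabla^{-\alpha(t)}g(t)
=\sum_{t=a+1}^{b-1}\frac{1-\alpha(t)}{B(\alpha(t))}f(t)g(t)
+\sum_{t=a+1}^{b-1} f(t)\,{}_{a}^{*}\nabla^{-\alpha(t)}\!\frac{\alpha g}{B\circ\alpha}(t).
\]
Now apply the second identity of Lemma~\ref{IBPFD} (after swapping the roles of $f$ and $g$ and substituting $g\mapsto \alpha g/(B\circ\alpha)$) to transfer the variable-order operator to the other factor, yielding $\sum g(t)\frac{\alpha(t)}{B(\alpha(t))}\,\nabla_{b}^{-\alpha(t)}f(t)$. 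Combining with the symmetric term and comparing against (\ref{EQ6}) gives the right-hand side.

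I do not expect any real obstacle beyond bookkeeping: both identities reduce to one instance of Lemma~\ref{IBPFD} applied to a rescaled function. The only subtle point worth highlighting is the \emph{type crossover}: the LHS uses type~I while the RHS uses type~II (and conversely), which is forced by the fact that in Lemma~\ref{IBPFD} the weight $\alpha(t)/B(\alpha(t))$ sitting outside the classical sum on one side must be absorbed into the kernel (via $\alpha(s)/B(\alpha(s))$) on the other side after the order of summation is swapped.
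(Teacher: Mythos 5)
Your proof is correct and matches the paper's own argument essentially line for line: both expand the $AB$ sum via the second lines of (\ref{EQ5})--(\ref{EQ8}), observe that the local term $\frac{1-\alpha(t)}{B(\alpha(t))}f(t)g(t)$ is symmetric, and apply Lemma~\ref{IBPFD} to the remaining classical sum with $f$ (resp.\ $g$) rescaled to $\alpha f/(B\circ\alpha)$. Your closing remark on the forced type~I/type~II crossover is a nice explicit articulation of what the paper leaves implicit.
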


\begin{proof}
From  Definition~\ref{left AB fractional sums} 
and the first part of Lemma~\ref{IBPFD}, we get
\begin{eqnarray*}
\sum_{t=a+1}^{b-1}f(t)~_{a}^{AB}\nabla^{-\alpha(t)}g(t)
&=&\sum_{t=a+1}^{b-1}f(t)\frac{1-\alpha(t)}{B(\alpha(t))}g(t)
+\sum_{t=a+1}^{b-1}f(t)\frac{\alpha(t)}{B(\alpha(t))}{_{a}}\nabla^{-\alpha(t)}g(t)\\
&=&\sum_{t=a+1}^{b-1}f(t)\frac{1-\alpha(t)}{B(\alpha(t))}g(t)
+\sum_{t=a+1}^{b-1}g(t)~^{*}\nabla_{b}^{-\alpha(t)}\frac{\alpha f}{B\circ\alpha}(t)\\
&=&\sum_{t=a+1}^{b-1}g(t)\left(\frac{1-\alpha(t)}{B(\alpha(t))}f(t)
+^{*}\nabla_{b}^{-\alpha(t)}\frac{\alpha f}{B\circ\alpha}(t)\right)\\
&=&\sum_{t=a+1}^{b-1}g(t)~^{*AB}\nabla_{b}^{-\alpha(t)}f(t).
\end{eqnarray*}
The proof of the second assertion is similar to the first one. It follows from 
Definition~\ref{left AB fractional sums} and the second part of Lemma~\ref{IBPFD}.
\end{proof}

\begin{lemma}
\label{GIOIBPF}
Let $0<\alpha(t)<1/2$ for all $t\in\mathbb{N}_{a,b}$.  
For functions $f,g:\mathbb{N}_{a,b}\rightarrow \mathbb{R}$, we have
\begin{equation*}
\sum_{t=a+1}^{b-1}f(t)~\textbf{E}_{\overline{\alpha(t),1},
\frac{-\alpha(t)}{1-\alpha(t)},a^+}g(t)=\sum_{t=a+1}^{b-1} 
g(t)~\mathcal{E}_{\overline{\alpha(t),1},\frac{-\alpha(t)}{1-\alpha(t)},b^-}f(t);
\end{equation*}
\begin{equation*}
\sum_{t=a+1}^{b-1}f(t)~\mathcal{E}_{\overline{\alpha(t),1},
\frac{-\alpha(t)}{1-\alpha(t)},a^+}g(t)=\sum_{t=a+1}^{b-1} g(t)
~\textbf{E}_{\overline{\alpha(t),1},\frac{-\alpha(t)}{1-\alpha(t)},b^-}f(t).
\end{equation*}
\end{lemma}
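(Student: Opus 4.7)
The plan is to prove each identity by a Fubini-style interchange of the double sum, exactly as in the proof of Lemma~\ref{IBPFD}. The whole point of the type I / type II distinction in Definitions~\ref{GIODEF1}--\ref{GIODEF2} is that, when one swaps the order of summation, the parameter $\alpha$ which was attached to the \emph{outer} index becomes attached to the \emph{inner} index, so a type I operator on one side turns into a type II operator on the other.

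More concretely, for the first identity I would start from the definition of $\textbf{E}_{\overline{\alpha(t),1},\frac{-\alpha(t)}{1-\alpha(t)},a^+}$ in \eqref{GIO} and expand
\begin{equation*}
\sum_{t=a+1}^{b-1} f(t)\,\textbf{E}_{\overline{\alpha(t),1},\frac{-\alpha(t)}{1-\alpha(t)},a^+}g(t)
=\sum_{t=a+1}^{b-1}\sum_{s=a+1}^{t} f(t)\,\frac{B(\alpha(t))}{1-\alpha(t)}\, E_{\overline{\alpha(t)}}\!\left[\frac{-\alpha(t)}{1-\alpha(t)},\,t-\rho(s)\right] g(s).
\end{equation*}
The region $\{(t,s):a+1\le t\le b-1,\ a+1\le s\le t\}$ coincides with $\{(s,t):a+1\le s\le b-1,\ s\le t\le b-1\}$, so I would swap the order and pull $g(s)$ outside. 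Renaming the dummy variable, the inner sum becomes exactly $\mathcal{E}_{\overline{\alpha(t),1},\frac{-\alpha(t)}{1-\alpha(t)},b^-}f(s)$ according to \eqref{starGIOr}, because the factor $B(\alpha(t))/(1-\alpha(t))$ and the Mittag--Leffler argument $t-\rho(s)$, both originally indexed by the outer variable $t$, are now naturally read as indexed by the new inner variable of $\mathcal{E}_{\cdots,b^-}$.

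The second identity is entirely symmetric: start from $\mathcal{E}_{\overline{\alpha(t),1},\frac{-\alpha(t)}{1-\alpha(t)},a^+}g(t)$ as in \eqref{starGIO}, swap the order of summation over the same triangular region, and observe that the coefficient $B(\alpha(s))/(1-\alpha(s))$ and the kernel $E_{\overline{\alpha(s)}}[\,\cdot\,,\,t-\rho(s)]$, which were indexed by the (now outer) variable $s$, assemble into the type I right operator $\textbf{E}_{\overline{\alpha(t),1},\frac{-\alpha(t)}{1-\alpha(t)},b^-}f$ of \eqref{GIOr}.

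There is no real obstacle: no convergence question arises because the sums are finite, and the Mittag--Leffler series defining $E_{\overline{\alpha}}$ is absolutely convergent under the hypothesis $|{-\alpha(t)}/(1-\alpha(t))|<1$ guaranteed by $0<\alpha(t)<1/2$. The only care needed is bookkeeping of the indices in the subscripts of $\alpha$, making sure that after the swap the dependence of $\alpha$ on the summation index exactly matches the definition of the opposite-type operator on the right-hand side --- this is precisely what distinguishes types $I$ and $II$ and what makes the duality work.
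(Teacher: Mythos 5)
Your proposal is correct and follows essentially the same route as the paper: expand the operator via its definition, interchange the two finite sums over the triangular region, and recognize the resulting inner sum (with the weight $B(\alpha(\cdot))/(1-\alpha(\cdot))$ and the Mittag--Leffler kernel now indexed by the inner variable) as the opposite-type right operator from Definition~\ref{GIODEF2}. The paper's proof of the first identity is exactly this computation, and it likewise dispatches the second identity by symmetry.
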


\begin{proof}
From Definitions~\ref{GIODEF1} and \ref{GIODEF2}, 
and by changing the order of summation, we have
\begin{eqnarray*}
\sum_{t=a+1}^{b-1}f(t)~\textbf{E}_{\overline{\alpha(t),1},
\frac{-\alpha(t)}{1-\alpha(t)},a^+}g(t)
&=&\sum_{t=a+1}^{b-1} f(t)\frac{B(\alpha(t))}{1-\alpha(t)}
\sum_{s=a+1}^tE_{\overline{\alpha(t)}} 
\left[\frac{-\alpha(t)}{1-\alpha(t)},t-\rho(s)\right]g(s)\\
&=&\sum_{s=a+1}^{b-1}g(s)\sum_{t=s}^{b-1}
\frac{B(\alpha(t))}{1-\alpha(t)}~E_{\overline{\alpha(t)}} 
\left[\frac{-\alpha(t)}{1-\alpha(t)}, t-\rho(s)\right]f(t)\\
&=&\sum_{s=a+1}^{b-1}g(s)~\mathcal{E}_{\overline{\alpha(s), 1},
\frac{-\alpha(s)}{1-\alpha(s)},b^-}f(s).
\end{eqnarray*}
The proof of the second assertion follows similarly.
\end{proof}

\begin{theorem}
\label{main}
Let $0<\alpha(t)<1/2$ for all $t\in\mathbb{N}_{a,b}$.  
For functions $f,g:\mathbb{N}_{a,b}\rightarrow \mathbb{R}$, we have
\begin{equation*}
\sum_{t=a+1}^{b-1}f(t)~^{ABC}_{a}\nabla^{\alpha(t)}g(t)
=g(t)~\mathcal{E}_{\overline{\alpha(t), 1},\frac{-\alpha(t)}{1-\alpha(t)},b^-}
f(t)\Big|_{a}^{b-1}+\sum_{t=a+1}^{b-1}g(t-1)~
^{ABR}\widehat{\nabla}_b^{\alpha(t)}f(t-1);
\end{equation*}
\begin{equation*}
\sum_{t=a+1}^{b-1}f(t)~^{ABC}_{a}\widehat{\nabla}^{\alpha(t)}g(t)
=g(t)~\textbf{E}_{\overline{\alpha(t), 1},\frac{-\alpha(t)}{1-\alpha(t)},b^-}
f(t)\Big|_{a}^{b-1}+\sum_{t=a+1}^{b-1}g(t-1)~
^{ABR}\nabla_b^{\alpha(t)}f(t-1);
\end{equation*}
\begin{equation*}
\sum_{t=a+1}^{b-1}f(t)~^{ABC}\nabla^{\alpha(t)}_{b}g(t)
=-g(t)~\mathcal{E}_{\overline{\alpha(t), 1},
\frac{-\alpha(t)}{1-\alpha(t)},a^+}f(t)\Big|_{a+1}^{b}
+\sum_{t=a+1}^{b-1}g(t+1)~^{ABR}_{a}\widehat{\nabla}^{\alpha(t)}f(t+1);
\end{equation*}
\begin{equation*}
\sum_{t=a+1}^{b-1}f(t)~^{ABC}\widehat{\nabla}^{\alpha(t)}_{b}g(t)
=-g(t)~\textbf{E}_{\overline{\alpha(t), 1},
\frac{-\alpha(t)}{1-\alpha(t)},a^+}f(t)\Big|_{a+1}^{b}
+\sum_{t=a+1}^{b-1}g(t+1)~^{ABR}_{a}\nabla^{\alpha(t)}f(t+1).
\end{equation*}
\end{theorem}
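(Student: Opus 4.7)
The plan is to prove the four identities in parallel, since each reduces to the same four-step chain; I will walk through the first in detail and indicate how the remaining three differ only in sign conventions and shift directions.

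For the first identity, I would start by using the definition of the left type $I$ Caputo operator, which gives $^{ABC}_{a}\nabla^{\alpha(t)}g(t)=\textbf{E}_{\overline{\alpha(t),1},\frac{-\alpha(t)}{1-\alpha(t)},a^+}\nabla g(t)$. Substituting this into the left-hand side and applying the first part of Lemma~\ref{GIOIBPF} (with the roles of $f$ and $\nabla g$) transfers the operator onto $f$, at the cost of swapping $a^+ \leftrightarrow b^-$ and $\textbf{E} \leftrightarrow \mathcal{E}$. Writing $h(t) := \mathcal{E}_{\overline{\alpha(t),1},\frac{-\alpha(t)}{1-\alpha(t)},b^-}f(t)$ for brevity, this step rewrites the sum as $\sum_{t=a+1}^{b-1}(\nabla g)(t)\, h(t)$.

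Next I would apply the nabla product rule $\nabla(gh)(t)=(\nabla g)(t)h(t)+g(t-1)(\nabla h)(t)$ and sum telescopically from $a+1$ to $b-1$, which produces the boundary term $g(t)h(t)\big|_{a}^{b-1}$ together with $-\sum_{t=a+1}^{b-1}g(t-1)(\nabla h)(t)$. To finish, I would recognize $\nabla h(t)$ as $\Delta h(t-1)$ and invoke Definition~\ref{RLVOR} (type $II$), which gives $^{ABR}\widehat{\nabla}_b^{\alpha(t)}f(t-1)=-\Delta h(t-1)=-\nabla h(t)$. This converts the remaining sum into $+\sum_{t=a+1}^{b-1}g(t-1)\,^{ABR}\widehat{\nabla}_b^{\alpha(t)}f(t-1)$, matching the stated right-hand side.

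The other three identities follow the same template with two small variations. For the second identity, the left type $II$ Caputo brings in $\mathcal{E}_{a^+}\nabla g$, so the \emph{second} part of Lemma~\ref{GIOIBPF} shifts the operator to $\textbf{E}_{b^-}$, and the analogue of the final step now invokes the type~$I$ right Riemann--Liouville definition $^{ABR}\nabla_b^{\alpha(t)}f=-\Delta\textbf{E}_{b^-}f$. For the third and fourth identities, the right Caputo operators contain $-\Delta g$ rather than $\nabla g$, so I would instead use delta summation by parts, $\Delta(gh)(t)=(\Delta g)(t)h(t)+g(t+1)(\Delta h)(t)$, summed from $a+1$ to $b-1$; this produces the boundary term $g(t)\cdot(\text{transferred operator})\big|_{a+1}^{b}$ and a residual sum whose kernel $\Delta\,(\textbf{E}\text{ or }\mathcal{E})_{a^+}f(t)$ equals $\nabla\,(\textbf{E}\text{ or }\mathcal{E})_{a^+}f(t+1)$, precisely the left Riemann--Liouville AB operator applied at $t+1$ by Definition~\ref{RLVO}.

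The routine work is a correct chain of definition unfoldings and a standard product-rule identity; the main obstacle, and the only place where errors are easy to make, is the bookkeeping of the shifts and signs. In particular one must keep straight (i) which of $a^+,b^-$ and which of $\textbf{E},\mathcal{E}$ appears after each application of Lemma~\ref{GIOIBPF}, (ii) whether the summation by parts yields $g(t-1)$ or $g(t+1)$, and (iii) that the boundary limits shift from $\{a,b-1\}$ in the nabla case to $\{a+1,b\}$ in the delta case. Once a consistent notation (such as the placeholder $h$ above) is fixed, the four cases are obtained by a disciplined symmetric substitution.
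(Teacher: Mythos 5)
Your proposal is correct and follows essentially the same route as the paper: unfold the Caputo definition, transfer the generalized integral operator via Lemma~\ref{GIOIBPF}, apply the nabla (resp.\ delta) summation by parts from ordinary difference calculus, and identify the residual kernel with the appropriate Riemann--Liouville $AB$ difference via Definitions~\ref{RLVO} and \ref{RLVOR}. The sign and shift bookkeeping you describe matches the paper's computation, which likewise proves only the first identity in detail and notes that the rest follow similarly.
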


\begin{proof}
We will only prove the first assertion. The proof of the others follow similarly. 
From Definitions~\ref{RLVOR} and \ref{LeftC}, the first part of Lemma~\ref{GIOIBPF} 
and the summation by parts formula from ordinary difference calculus, we get
\begin{equation*}
\begin{split}
\sum_{t=a+1}^{b-1}&f(t)~^{ABC}_{a}\nabla^{\alpha(t)}g(t)\\
&=\sum_{t=a+1}^{b-1}f(t)\textbf{ E}_{\overline{\alpha(t),1},
\frac{-\alpha(t)}{1-\alpha(t)},a^+}\nabla g(t)\\
&=\sum_{t=a+1}^{b-1} \nabla g(t)~\mathcal{E}_{\overline{\alpha(t), 1},
\frac{-\alpha(t)}{1-\alpha(t)},b^-}f(t)\\
&=g(t)~\mathcal{E}_{\overline{\alpha(t), 1},
\frac{-\alpha(t)}{1-\alpha(t)},b^-}f(t)|_{a}^{b-1}
-\sum_{t=a+1}^{b-1}g(t-1)\nabla\mathcal{E}_{\overline{\alpha(t), 1},
\frac{-\alpha(t)}{1-\alpha(t)},b^-}f(t)\\
&=g(t)~\mathcal{E}_{\overline{\alpha(t), 1},\frac{-\alpha(t)}{1-\alpha(t)},b^-}f(t)|_{a}^{b-1}
-\sum_{t=a+1}^{b-1}g(t-1)\Delta\mathcal{E}_{\overline{\alpha(t), 1},
\frac{-\alpha(t)}{1-\alpha(t)},b^-}f(t-1)\\
&=g(t)~\mathcal{E}_{\overline{\alpha(t), 1},
\frac{-\alpha(t)}{1-\alpha(t)},b^-}f(t)|_{a}^{b-1}
+\sum_{t=a+1}^{b-1}g(t-1)~
^{ABR}\widehat{\nabla}_b^{\alpha(t)}f(t-1).
\end{split}
\end{equation*}
The proof is complete.
\end{proof}


\section{Variable order fractional variational principles}
\label{sec:04}

The fractional calculus of variations of variable-order
is a subject under strong current development \cite{MyID:404,MR3787702}.
However, to the best of our knowledge, available results 
are only for the continuous time scale $\mathbb{T} = \mathbb{R}$.
Here we obtain the main result of a variational calculus, that is,
an Euler--Lagrange necessary optimality condition, for the 
isolated time scale $\mathbb{T} = \mathbb{N}_{a+1,b-1}$.

Let $J$ be a functional of the form
\begin{equation*}
J(f)=\sum_{t=a+1}^{b-1}L(t,f^{\rho}(t),~^{ABC}_{a}\nabla^{\alpha(t)}f(t)),
\end{equation*}
where $0<\alpha(t)<1/2$ for all $t\in\mathbb{N}_{a+1,b-1}$, 
$f:\mathbb{N}_{a,b-1}\to\mathbb{R}$ and 
$L:\mathbb{N}_{a+1,b-1}\times\mathbb{R}\times\mathbb{R}\to\mathbb{R}$.

\begin{theorem}
\label{thm:EL:eq}
Let $f$ be a local extremum of $J$ satisfying the boundary conditions
\begin{equation*}
f(a)=A,\quad f(b-1)=B.
\end{equation*}
Then $f$ satisfies the Euler--Lagrange equation
$$
L_1^{\sigma}(t)+~^{ABR}\widehat{\nabla}_b^{\alpha(t)}L_2(t)=0,
\quad t\in\mathbb{N}_{a+1,b-2},
$$
where $L_1=\frac{\partial L}{\partial f^{\rho}}$ and 
$L_2=\frac{\partial L}{\partial~^{ABC}_{a}\nabla^{\alpha(t)}f}$.
\end{theorem}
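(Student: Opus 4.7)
The plan is to apply the classical direct method of the calculus of variations, adapted to this isolated-time-scale, variable-order setting. I would start by fixing an admissible variation: choose $\eta:\mathbb{N}_{a,b-1}\to\mathbb{R}$ with $\eta(a)=\eta(b-1)=0$, so that $f_\epsilon := f+\epsilon\eta$ respects the boundary conditions. Since ${}^{ABC}_{a}\nabla^{\alpha(t)}$ is linear, differentiating $J(f_\epsilon)$ in $\epsilon$ and setting $\epsilon=0$ (using that $f$ is a local extremum) yields the first-variation identity
\begin{equation*}
\sum_{t=a+1}^{b-1}\Bigl[L_1(t)\,\eta^{\rho}(t)+L_2(t)\,{}^{ABC}_{a}\nabla^{\alpha(t)}\eta(t)\Bigr]=0.
\end{equation*}

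Next I would isolate $\eta$ in each of the two resulting sums. In the first sum, using $\eta^{\rho}(t)=\eta(t-1)$, a shift $s=t-1$ converts $\sum_{t=a+1}^{b-1}L_1(t)\eta(t-1)$ into $\sum_{s=a}^{b-2}L_1^{\sigma}(s)\eta(s)$; the boundary condition $\eta(a)=0$ then restricts this to $s\in\mathbb{N}_{a+1,b-2}$. In the second sum I would invoke the first summation-by-parts formula of Theorem~\ref{main} with $f\leftarrow L_2$ and $g\leftarrow\eta$. The boundary term $\eta(t)\,\mathcal{E}_{\overline{\alpha(t),1},\frac{-\alpha(t)}{1-\alpha(t)},b^{-}}L_2(t)\big|_{a}^{b-1}$ vanishes because $\eta$ kills both endpoints, leaving
\begin{equation*}
\sum_{t=a+1}^{b-1}L_2(t)\,{}^{ABC}_{a}\nabla^{\alpha(t)}\eta(t)=\sum_{t=a+1}^{b-1}\eta(t-1)\,{}^{ABR}\widehat{\nabla}_{b}^{\alpha(t)}L_2(t-1),
\end{equation*}
and the same index shift $s=t-1$ puts this in the form $\sum_{s=a+1}^{b-2}\eta(s)\,{}^{ABR}\widehat{\nabla}_{b}^{\alpha(t)}L_2(s)$.

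Combining the two rewritten sums gives
\begin{equation*}
\sum_{t=a+1}^{b-2}\eta(t)\Bigl[L_1^{\sigma}(t)+{}^{ABR}\widehat{\nabla}_{b}^{\alpha(t)}L_2(t)\Bigr]=0,
\end{equation*}
for every admissible $\eta$. I would close by appealing to the fundamental lemma of the discrete calculus of variations: since $\eta(t)$ is arbitrary on $\mathbb{N}_{a+1,b-2}$ (one may take $\eta$ supported at a single point), the bracket must vanish pointwise, which is precisely the claimed Euler--Lagrange equation.

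The substantive work is conceptually light; the main obstacle is bookkeeping. One must correctly track the interplay between the $\rho$ inside the Lagrangian (which produces the $\sigma$ on $L_1$), the $t-1$ evaluation produced by Theorem~\ref{main}, and the variable argument of $\alpha(\cdot)$ in the operator symbol, so that the reduced domain of summation $\mathbb{N}_{a+1,b-2}$ and the form of the final expression match the statement. Differentiability of $L$ in its second and third arguments is implicitly assumed so that the first variation makes sense; beyond that, once the index alignment is checked, the argument reduces to the standard pattern.
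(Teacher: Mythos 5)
Your proposal follows essentially the same route as the paper: form the variation $f+\varepsilon\eta$, compute the first variation, apply the first summation-by-parts formula of Theorem~\ref{main} so that the boundary term dies by $\eta(a)=\eta(b-1)=0$, and invoke the arbitrariness of $\eta$. You are in fact slightly more explicit than the paper about the index shift $s=t-1$ that converts the condition at the points $t\in\mathbb{N}_{a+2,b-1}$ into the stated form $L_1^{\sigma}(t)+{}^{ABR}\widehat{\nabla}_b^{\alpha(t)}L_2(t)=0$ on $\mathbb{N}_{a+1,b-2}$, which is a welcome clarification rather than a deviation.
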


\begin{proof}
Let $\varepsilon$ be a small real parameter and $\eta:\mathbb{N}_{a,b-1}\to\mathbb{R}$ 
be a function such that $\eta(a)=\eta(b-1)=0$. Consider a variation of $f$, 
say $f+\varepsilon\eta$. Since the Caputo difference operator $
^{ABC}_{a}\nabla^{\alpha(t)}$ is linear, it follows that
\begin{equation*}
J(f+\varepsilon\eta)=\sum_{t=a+1}^{b-1}L(t,f^{\rho}(t)
+\varepsilon\eta^{\rho}(t),~^{ABC}_{a}\nabla^{\alpha(t)}f(t)
+\varepsilon~^{ABC}_{a}\nabla^{\alpha(t)}\eta(t)).
\end{equation*}
Define $\hat{J}(\varepsilon)=J(f+\varepsilon\eta)$. Because $f$ 
is a local extremizer of $J$, $\hat{J}$ attains a local extremum 
at $\varepsilon= 0$. Differentiating $\hat{J}(\varepsilon)$ at zero, we get
\begin{equation*}
\sum_{t=a+1}^{b-1}\eta^{\rho}(t)\frac{\partial L}{\partial
f^\rho}(t,f^{\rho}(t),~^{ABC}_{a}\nabla^{\alpha(t)}f(t))
+~^{ABC}_{a}\nabla^{\alpha(t)}\eta(t)~\frac{\partial L}{\partial~ 
^{ABC}_{a}\nabla^{\alpha(t)}f}(t,f^{\rho}(t),~^{ABC}_{a}\nabla^{\alpha(t)}f(t))=0.
\end{equation*}
Using the first integration by parts formula in Theorem~\ref{main}, we have
\begin{multline*}
\sum_{t=a+1}^{b-1}\eta^{\rho}(t)\left[\frac{\partial L}{\partial
f^\rho}(t,f^{\rho}(t),~^{ABC}_{a}\nabla^{\alpha(t)}f(t))
+\left(^{ABR}\widehat{\nabla}_b^{\alpha(t)}
\frac{\partial L}{\partial~ ^{ABC}_{a}
\nabla^{\alpha(t)}f}(t,f^{\rho}(t),~^{ABC}_{a}
\nabla^{\alpha(t)}f(t))\right)(t-1)\right]\\
+\eta(t)\left(\mathcal{E}_{\overline{\alpha(t), 1},
\frac{-\alpha(t)}{1-\alpha(t)},b^-}\frac{\partial L}{\partial~
^{ABC}_{a}\nabla^{\alpha(t)}f}(t,f^{\rho}(t),~^{ABC}_{a}
\nabla^{\alpha(t)}f(t))\right)(t)\Big|_{a}^{b-1}=0.
\end{multline*}
Since $\eta(a)=\eta(b-1)=0$ and $\eta$ is arbitrary, it follows that
$$
\frac{\partial L}{\partial f^\rho}(t,f^{\rho}(t),~^{ABC}_{a}\nabla^{\alpha(t)}f(t))
+\left(^{ABR}\widehat{\nabla}_b^{\alpha(t)}\frac{\partial L}{\partial~
^{ABC}_{a}\nabla^{\alpha(t)}f}(t,f^{\rho}(t),~^{ABC}_{a}
\nabla^{\alpha(t)}f(t))\right)(t-1)=0
$$
for all $t\in\mathbb{N}_{a+2,b-1}$.
\end{proof}

Although we only consider here a class of 
fractional variable order variational problems (FVOVP),
our Theorem~\ref{thm:EL:eq} can be easily extended to many other related
FVOVPs involving the new variable-order fractional differences
introduced in Section~\ref{sec:02}. We trust that this observation 
will initiate some interest in further future developments.


\section*{Acknowledgements}

Abdeljawad is grateful to Prince Sultan University for funding 
this work through research group \emph{Nonlinear Analysis Methods in Applied Mathematics} 
(NAMAM), number RG-DES-2017-01-17;
Torres to the support of FCT within the R\&D unit CIDMA, UID/MAT/04106/2013.


\small


\end{document}